\newcommand{\utwi}[1]{\mbox{\boldmath $#1$}}
\renewcommand{\hat}{\widehat}
\renewcommand{\tilde}{\widetilde}
\newcommand{\cD}{{\cal D}}
\newcommand{\cN}{{\cal N}}
\newcommand{\cG}{{\cal G}}
\newcommand{\cA}{{\cal A}}
\newcommand{\cX}{{\cal X}}
\newcommand{\bc}{{\bf c}}
\newcommand{\bs}{{\bf s}}
\newcommand{\bx}{{\bf x}}
\newcommand{\bu}{{\bf u}}
\newcommand{\bv}{{\bf v}}
\newcommand{\bw}{{\bf w}}
\newcommand{\bz}{{\bf z}}
\newcommand{\by}{{\bf y}}
\newcommand{\bC}{{\bf C}}
\newcommand{\bPhi}{{\utwi{\Phi}}}
\newcommand{\reals}{\mathbb{R}}
\newcommand{\sfT}{\textsf{T}}
\newcommand{\proj}{\mathsf{Proj}}
\newtheorem{lemma}{Lemma}
\newtheorem{theorem}{Theorem}
\newtheorem{corollary}{Corollary}
\newtheorem{assumption}{Assumption}
\newtheorem{remark}{Remark}
\title{\LARGE \bf Asynchronous and Distributed Tracking \\ of Time-Varying Fixed Points} 
\author{Andrey Bernstein and Emiliano Dall'Anese 
\thanks{A. Bernstein is with the National Renewable Energy Laboratory (NREL), Golden, CO, USA; email: andrey.bernstein@nrel.gov. E. Dall'Anese is with the University of Colorado Boulder, Boulder, CO, USA; email: emiliano.dallanese@colorado.edu. This work was supported by the Laboratory Directed Research and Development Program at NREL.  NREL is a national laboratory of the U.S. Department of Energy Office of Energy Efficiency and Renewable Energy operated by the Alliance for Sustainable Energy, LLC. }
}
\begin{document}
\maketitle

\begin{abstract}
This paper develops an algorithmic framework for tracking fixed points of time-varying contraction mappings. Analytical results for the tracking error are established for the cases where: (i)~the underlying contraction self-map changes at each step of the algorithm; (ii) only an imperfect information of the map is available; and, (iii) the algorithm is implemented in a distributed fashion, with communication delays and packet drops leading to asynchronous algorithmic updates. The analytical results are applicable to several classes of problems, including time-varying contraction mappings emerging from online and asynchronous implementations of gradient-based methods for time-varying convex programs. In this domain, the proposed framework can also capture the operating principles of feedback-based online algorithms, where the online gradient steps are suitably modified to accommodate actionable feedback from an underlying physical or logical network. Examples of applications and illustrative numerical results are provided. 
\end{abstract}

\section{Introduction}
\label{sec:introduction}

A number of iterative algorithms can be expressed in the form~\cite{Bertsekas1999} 
\begin{equation} 
\label{eqn:fixed_point}
\bx^{(k+1)} = f (\bx^{(k)} ), 
\end{equation}
where $f: \mathbb{R}^m \rightarrow \mathbb{R}^m$ is a continuous map, $\bx^{(k)}$ is an $m$-dimensional real vector, and $k \in \mathbb{N}$ denotes the iteration index. Assuming the existence of a (possibly unique) \emph{fixed point} $\bx^*$ that satisfies the equation $\bx^* = f(\bx^*)$,
convergence characteristics of the iterative method~\eqref{eqn:fixed_point} to $\bx^*$ hinge on pertinent regularity conditions for $f(\cdot)$; typical conditions include a contraction map~\cite{Bertsekas1999}, an $\alpha$-averaged operator \cite{Simonetto17}, and a paracontraction map~\cite{Elsner92}. When~\eqref{eqn:fixed_point} is implemented in a distributed setting, with a number of agents sharing the computational burden of the algorithmic update and coordinating through information exchange~\cite{Bertsekas_fixedpoint}, additional prerequisites are imposed on the communication architecture (e.g., connectivity of the possibly time-varying communication graph, as well as asynchronicity of the information exchange)~\cite{Liu17,Fullmer16,Fullmer17} or the number of communication losses~\cite{opfpursut_comm}. Examples of instances of~\eqref{eqn:fixed_point} include (projected) gradient-based methods for solving constrained optimization problems~\cite{Bertsekas_ConvexAnalysis}, iterative methods for solving linear and nonlinear systems of equations~\cite{Mou15}, and consensus problems for networked systems~\cite{OlfatiSaber07,Fang08} just to mention a few. 

Consider now a sequence of \emph{time-varying} mappings $f^{(t)}: \mathbb{R}^m \rightarrow \mathbb{R}^m$, where $t \in \mathbb{N}$ is a \emph{temporal} index, along with the sequence of $m$-dimensional vectors of fixed points
\begin{align} \label{eqn:fixed_point_time_varying}
\bx^{(\ast , t)} = f^{(t)}(\bx^{(\ast , t)}) \, , t \in \mathbb{N} . 
\end{align}
One way to identify the sequence of  fixed points~\eqref{eqn:fixed_point_time_varying} is to sequentially compute~\eqref{eqn:fixed_point} until convergence within each interval $[t, t+1)$; that is, 
at time $t$ one can perform the following computation until convergence:  
\begin{equation} 
\label{eqn:fixed_pointint}
\bx^{(t, k+1)} = f^{(t)} (\bx^{(t, k)} ), \, k = 1, 2, \ldots
\end{equation}
and subsequently advance to the next time step $t \rightarrow t+1$. In lieu of the offline strategy~\eqref{eqn:fixed_pointint}, this paper considers the following inexact \emph{online} (or \emph{running}) algorithm
\begin{equation} 
\label{eqn:fp_algo_online}
\bx^{(t+1)} = \tilde{f}^{(t)} (\bx^{(t)}) \, ,
\end{equation}
where: (c1) the map may vary at each step of the algorithm; and (c2) $\tilde{f}^{(t)}$ represents an  \emph{approximation} of the true map $f^{(t)}$. The term ``approximation'' will be supported by a proper analytical definition, and examples of approximate maps will be given shortly. The running algorithm~\eqref{eqn:fp_algo_online} is motivated by real-time implementations, where it might not be affordable to run the batch iterations~\eqref{eqn:fixed_pointint} within each time interval due to underlying computational and communication constraints.  

This paper analyzes the tracking properties of~\eqref{eqn:fp_algo_online}, defined in terms of the distance between $\bx^{(t)}$ and $\bx^{(\ast , t)}$ relative to a given norm, under the operational setting (c1) and (c2), and when (c3) the iterations are implemented in a \emph{distributed} and \emph{asynchronous} fashion. A distributed setting implies that the evaluation of the map can be decoupled across various agents (or processors), sharing information on local portions of the vector $\bx^{(t)}$ via an underlying communication network~\cite{Bertsekas_fixedpoint}. Asynchronicity emerges from  latencies in the end-to-end communication paths between the nodes as well as communication drops; this setting can be also modeled under the formalism of time-varying (communication) graphs~\cite{Liu17,Fullmer16}. Under suitable conditions, the paper establishes analytical results for the tracking error in the implementation settings (c1)--(c3).      
When the mapping $f^{(t)}$ is known and the iterations are performed in a synchronous manner, the convergence properties of the running synchronous algorithm~\eqref{eqn:fp_algo_online} were first investigated in~\cite{Simonetto17} under the assumption that $f^{(t)}$ is an $\alpha$-averaged operator. In this setting,~\cite{Simonetto17} showed that \eqref{eqn:fp_algo_online} is close in spirit to online implementations of the Mann-Krasnosel'skii iterations~\cite{Krasno,Mann,Moreau}; see~\cite{Simonetto17} and pertinent references therein for a  survey of results related to convergence of batch and online versions of the Mann-Krasnosel'skii method. 

Overall, the main contribution of this paper is two-fold. 

\noindent (i) The paper extends the results of~\cite{Bertsekas_fixedpoint, Simonetto17} (and pertinent references therein) to the cases (c1)--(c3). Compared to~\cite{Simonetto17}, however, we analyze a more restricted case of contraction mappings (which is a special case of the $\alpha$-averaged operators considered in \cite{Simonetto17}). To the best of our knowledge, our paper is the first to address  online, distributed, and asynchronous implementations of the fixed-point iterations, with a possibly imperfect knowledge of the mappings.   An asynchronous implementation of iteration \eqref{eqn:fp_algo_online} was considered in \cite{Frommer00}; however, the analysis of \cite{Frommer00} is limited to  the assumption of a \emph{common fixed point} at every time step and a perfect knowledge of the mappings.

\noindent (ii)  As pointed out in~\cite{Bertsekas_fixedpoint}, in the asynchronous case, the convergence results depend on the norm used to define the contraction. When the $\ell_\infty$ norm is utilized, it was shown that the asynchronous implementation of  \eqref{eqn:fixed_point} converges to the fixed point~\cite{Bertsekas_fixedpoint}; however, this is not necessarily the case for other norms. This paper provides further conditions on the family of maps that ensure  convergence of the asynchronous iteration when the map is a contraction with respect to the $\ell_2$ norm. This permeate benefits in optimization-related applications, as maps  are contractions in $\ell_2$ norm (but not necessarily in $\ell_\infty$).   

Finally, to better appreciate the analysis of the algorithm under imperfect knowledge of the mappings, we show that the proposed framework can be utilized to model and analyze  \emph{feedback-based online algorithms} for solving 
time-varying optimization problems~\cite{opfPursuit,AndreayOnlineOpt,Bolognani_feedback_15,Hauswirth16, Tang17}; in this domain, gradient steps of a primal-dual-type method are suitably modified to accommodate actionable feedback from an underlying physical or logical network. Consider, as an illustrative example (a more detailed explanation will be provided shortly in the paper), the following projected gradient step associated with the time-varying problem $\min_{\bx \in \cX^{(t)}} g^{(t)}(\by^{(t)}(\bx))$: 
\begin{align}
\hspace{-.2cm} \bx^{(t+1)} = \proj_{\cX^{(t)}} \Big\{ \bx^{(t)} - \alpha \bC^\sfT \nabla_\by g^{(t)} (\bC \bx^{(t)} + \bw^{(t)})  \Big\} \label{eq:online}
\end{align}
where $\cX(t)$ is a time-varying convex set, $\proj_{\cX}(\bz) := \arg \min_{\bx \in \cX} \|\bz - \bx\|_2$ denotes projection onto a convex set, $\alpha > 0$ is the step size, and $\by^{(t)}(\bx) := \bC \bx^{(t)} + \bw^{(t)}$ is a model for some measurable quantities in a physical or logical network~\cite{opfPursuit,AndreayOnlineOpt,LowOnlineOPF,Hauswirth16}. A solution of the problem at time $t$ is in fact a fixed point $\bx^{(\ast , t)} = \proj_{\cX^{(t)}} \{ \bx^{(\ast , t)} - \alpha \bC^\sfT \nabla_\by g^{(t)} (\bC \bx^{(\ast , t)} + \bw^{(t)}) \}$.  When a measurement $\tilde{\by}^{(t)}$ of $\by^{(t)}(\bx)$ is available, a feedback-based counterpart of~\eqref{eq:online} amounts to the following iteration~\cite{opfPursuit}   
\begin{align}
\hspace{-.2cm} \bx^{(t+1)} = \proj_{\cX^{(t)}} \Big\{ \bx^{(t)} - \alpha \bC^\sfT \nabla_\by g^{(t)} (\tilde{\by}^{(t)})  \Big\} \label{eq:online_meas}
\end{align}
Since $\tilde{\by}^{(t)}$ is a noisy version of $\by^{(t)}(\bx)$ due to, e.g., measurement noise, model mismatches, or even missing measurements, step~\eqref{eq:online_meas} is based on an approximation of the (true) mapping $f^{(t)}(\bx^{(t)}) = \proj_{\cX^{(t)}} \{ \bx^{(t)} - \alpha \bC^\sfT \nabla_\by g^{(t)} (\bC \bx^{(t)} + \bw^{(t)}) \}$.    

The rest of the paper is organized as follows. Section~\ref{sec:tracking} will address the tracking of time-varying fixed points, and Section~\ref{sec:asynchronous} will analyze the tracking performance of an asynchronous and distributed algorithm. Section~\ref{sec:examples} will outline two examples of applications. Section~\ref{sec:results} will provides illustrative numerical results. Section~\ref{sec:conc} concludes the paper. 

\section{Tracking of Time-varying Fixed points}
\label{sec:tracking}
Consider a sequence of mappings $f^{(t)}: \reals^m \rightarrow \reals^{m}$, where $t \in \mathbb{N}$ a temporal index (for a given discretization of the temporal axis). Assume that $f^{(t)}: \reals^m \rightarrow \reals^{m}$ is a self map and a contraction. Formally, the following assumptions are presupposed.

\begin{assumption} \label{asm:contr}
There exists a closed set $\cD \subseteq \reals^m$ such that for each $t \in \mathbb{N}$:
\begin{enumerate}[(i)]
\item For all $\bx \in \cD$, $f^{(t)}(\bx) \in \cD$; and,
\item There exists a scalar $0 < L^{(t)} < 1$ such that $\| f^{(t)}(\bx) - f^{(t)}(\bx') \| \leq L^{(t)} \|\bx - \bx' \|$  for all $\bx, \bx' \in \cD$.
\end{enumerate}
Furthermore, $L := \sup_{t \geq 1} L^{(t)} < 1$.
\end{assumption}

Under Assumption \ref{asm:contr}, for each time step $t$, Banach fixed point theorem asserts that there exists a unique solution to the equation
\begin{equation} \label{eqn:fp}
\bx = f^{(t)} (\bx) \, ,
\end{equation}
which is denoted by $\bx^{(\ast,t)} \in \cD$. The rate of change of the fixed points is characterized by the following bound with respect to a given norm:
\begin{equation} \label{eqn:sigma}
\sigma^{(t)} := \|\bx^{(*, t+1)} - \bx^{(*, t)} \| \, ,
\end{equation}
and the following assumption is imposed. 

\begin{assumption} \label{asm:sigma}
We have that $\sigma := \sup_{t \geq 1} \sigma^{(t)} < \infty$.
\end{assumption}

This paper seeks the development of an algorithmic solution to track the $m$-dimensional vectors of fixed points $\bx^{(\ast,t)}$, $t \in \mathbb{N}$, when: (c1)~the underlying contraction self-map changes during the execution of the algorithm, leading to a \emph{running} or \emph{online} computation of the sequence of fixed points; and (c2) only an \emph{imperfect} information of the map is available. As discussed in Section~\ref{sec:examples} below, (c1) addmairesses the case where it is not feasible to solve \eqref{eqn:fp} to convergence within each time step due to complexity of evaluating $f^{(t)}$ iteratively; on the other hand,~(c2) copes with cases where the map $f^{(t)}$ is either not available or its exact evaluation is computationally infeasible. In particular, this setting will include the analysis of  \emph{feedback-based online algorithms}, which will be explained in Section~\ref{sec:examples}. 

Consider then an \emph{approximate mapping} $\tilde{f}^{(t)} :\cD \rightarrow \cD$, which is related to $f^{(t)}$ through the following assumption.

\begin{assumption}	\label{asm:approx}
For each $t$, there exists $e_f^{(t)} < \infty$ such that
\begin{equation}
\|f^{(t)}(\bx) - \tilde{f}^{(t)}(\bx)\| \leq e_f^{(t)}, \, \forall \bx \in \cD,
\end{equation}
and $e_f := \sup_{t \geq 1} e_f^{(t)} < \infty$.
\end{assumption}

\begin{remark}
In some applications, the set $\cD$ can be the entire Euclidean space $\reals^m$. In particular, this is true when considering the gradient-based optimization methods; see Section \ref{sec:feedback} below for details. Note, however, that our framework also captures a more general case, where $\cD$ can be a proper subset of $\reals^m$. This is the case, for example, of the load-flow application considered in Section \ref{sec:loadflow} below.
\end{remark}

The following online (i.e., running) algorithm is then proposed to track the sequence of the fixed points $\{\bx^{(*, t)} \}$:
\begin{equation} \label{eqn:fp_algo}
\bx^{(t+1)} = \tilde{f}^{(t)} (\bx^{(t)} ), \, t \in \mathbb{N},
\end{equation}
for some initial point $\bx^{(1)} \in \cD$. The convergence properties of the iterative algorithm defined by \eqref{eqn:fp_algo} are analyzed next.

\begin{theorem}
\label{thm:result_fixedpoint}
Under Assumptions \ref{asm:contr}, \ref{asm:sigma}, and \ref{asm:approx}, it holds that
\begin{align}
\|\bx^{(t+1)} - \bx^{(*, t+1)} \| \leq & \,\, \beta^{(t,0)} \|\bx^{(1)} - \bx^{(*, 1)}\| \nonumber \\
& + \sum_{\tau = 1}^t \beta^{(t, \tau)} \left(  e_f^{(\tau)} + \sigma^{(\tau)} \right)  \label{eq:bound_iter} 
\end{align}
for each $t$, where
\begin{equation} \label{eqn:beta}
\beta^{(t, \tau)} := 
\begin{cases}
\prod_{\ell = \tau + 1}^t L^{(\ell)}, & \textrm{if~} \tau = 0, \ldots, t - 1 \\
1, &  \textrm{if~} \tau = t.
\end{cases}
\end{equation}
In particular, the following  asymptotic bound holds: 
\begin{equation}
\label{eq:result_fixedpoint}
\limsup_{t \rightarrow \infty} \|\bx^{(t)} - \bx^{(*, t)} \| \leq \frac{e_f + \sigma}{1 - L}.
\end{equation}
\end{theorem}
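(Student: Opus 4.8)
The plan is to first derive a scalar recursion for the tracking error $d_t := \|\bx^{(t)} - \bx^{(*, t)}\|$, and then unroll it. Starting from the algorithm~\eqref{eqn:fp_algo}, write $\bx^{(t+1)} - \bx^{(*, t+1)} = \tilde{f}^{(t)}(\bx^{(t)}) - \bx^{(*, t+1)}$ and insert the two intermediate quantities $f^{(t)}(\bx^{(t)})$ and $\bx^{(*, t)}$. The triangle inequality then gives
\begin{align}
d_{t+1} \leq \|\tilde{f}^{(t)}(\bx^{(t)}) - f^{(t)}(\bx^{(t)})\| + \|f^{(t)}(\bx^{(t)}) - \bx^{(*, t)}\| + \|\bx^{(*, t)} - \bx^{(*, t+1)}\| . \nonumber
\end{align}
The first term is bounded by $e_f^{(t)}$ via Assumption~\ref{asm:approx}; the third term is exactly $\sigma^{(t)}$ by~\eqref{eqn:sigma}. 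For the middle term, use that $\bx^{(*, t)}$ is the fixed point of $f^{(t)}$, so $\|f^{(t)}(\bx^{(t)}) - \bx^{(*, t)}\| = \|f^{(t)}(\bx^{(t)}) - f^{(t)}(\bx^{(*, t)})\| \leq L^{(t)} d_t$ by the contraction property in Assumption~\ref{asm:contr}(ii) (this requires $\bx^{(t)}, \bx^{(*, t)} \in \cD$, which follows by induction from $\bx^{(1)} \in \cD$ and the self-map properties of $\tilde{f}^{(t)}$ and $f^{(t)}$). Combining, we obtain the one-step bound $d_{t+1} \leq L^{(t)} d_t + e_f^{(t)} + \sigma^{(t)}$.

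Next I would unroll this recursion by induction on $t$. Peeling off one factor at a time yields $d_{t+1} \leq \left(\prod_{\ell=1}^{t} L^{(\ell)}\right) d_1 + \sum_{\tau=1}^{t} \left(\prod_{\ell=\tau+1}^{t} L^{(\ell)}\right)\left(e_f^{(\tau)} + \sigma^{(\tau)}\right)$, and identifying $\beta^{(t,0)} = \prod_{\ell=1}^{t} L^{(\ell)}$, $\beta^{(t,\tau)} = \prod_{\ell=\tau+1}^{t} L^{(\ell)}$, and $\beta^{(t,t)} = 1$ according to~\eqref{eqn:beta} gives exactly~\eqref{eq:bound_iter}. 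Care is needed only with the index bookkeeping in the empty-product convention (the $\tau = t$ term).

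For the asymptotic bound~\eqref{eq:result_fixedpoint}, I would bound each $\beta^{(t,\tau)} \leq L^{t-\tau}$ using $L^{(\ell)} \leq L < 1$ from Assumption~\ref{asm:contr}, and replace $e_f^{(\tau)} \leq e_f$, $\sigma^{(\tau)} \leq \sigma$ from Assumptions~\ref{asm:approx} and~\ref{asm:sigma}. This yields $d_{t+1} \leq L^{t} d_1 + (e_f + \sigma) \sum_{j=0}^{t-1} L^{j} \leq L^{t} d_1 + \frac{e_f + \sigma}{1-L}$. Since $L^{t} d_1 \to 0$ as $t \to \infty$, taking $\limsup$ on both sides gives the claim.

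I do not anticipate a genuine obstacle here: the argument is a standard "contraction plus perturbation" telescoping estimate. The only point that deserves attention is verifying that all iterates and fixed points lie in $\cD$ so that the contraction inequality applies — this is handled by the closedness of $\cD$ together with the self-map conditions on $f^{(t)}$ and $\tilde f^{(t)}$ — and keeping the product indices consistent with the definition~\eqref{eqn:beta}.
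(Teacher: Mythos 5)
Your proposal is correct and follows essentially the same route as the paper: the same triangle-inequality decomposition into the approximation error $e_f^{(t)}$, the contraction term $L^{(t)}\|\bx^{(t)}-\bx^{(*,t)}\|$, and the drift $\sigma^{(t)}$, followed by unrolling the resulting recursion and bounding $\beta^{(t,\tau)}\leq L^{t-\tau}$ for the asymptotic claim. Your added remark about verifying that the iterates remain in $\cD$ is a welcome bit of care that the paper leaves implicit, but it does not change the argument.
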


\vspace{.2cm}

\begin{proof}
Consider starting with the following inequalities
\begin{align}
&\|\bx^{(t)} - \bx^{(*, t-1)} \| = \left\|\tilde{f}^{(t-1)} \left(\bx^{(t-1)} \right) - f^{(t-1)} \left(\bx^{(*, t-1)} \right) \right \| \nonumber \\
& \hspace{.9cm} \leq \left\|\tilde{f}^{(t-1)} \left(\bx^{(t-1)} \right) - f^{(t-1)} \left(\bx^{(t-1)} \right) \right\| \nonumber \\
& \hspace{1.3cm}  + \left\|f^{(t-1)} \left(\bx^{(t-1)} \right) - f^{(t-1)} \left(\bx^{(*,t-1)} \right) \right\| \nonumber \\
& \hspace{.9cm} \leq e_f^{(t-1)} + L^{(t-1)} \|\bx^{(t-1)} -  \bx^{(*,t-1)}\|, \label{eqn:inexact_bound}
\end{align}
where the first inequality follows by the triangle inequality and the second  is due to Assumption \ref{asm:contr} (ii) and Assumption \ref{asm:approx}. From~\eqref{eqn:sigma}, and by the triangle inequality, one has that: 
\begin{align}
&\|\bx^{(t+1)} - \bx^{(*, t+1)} \| \nonumber \\
& = \|\bx^{(t+1)} - \bx^{(*, t)} + (\bx^{(*, t)}- \bx^{(*, t+1)}) \| \nonumber \\
& \leq \|\bx^{(t+1)} - \bx^{(*, t)} \| + \|\bx^{(*, t+1)} - \bx^{(*, t)} \| \nonumber \\
& \leq e_f^{(t)} + \sigma^{(t)} + L^{(t)} \|\bx^{(t)} -  \bx^{(*,t)}\|.
\end{align}
Applying the last inequality recursively, one obtains~\eqref{eq:bound_iter}. 
where $\beta^{(t, \tau)}$ is given in \eqref{eqn:beta}.
Recall that $\beta^{(t, \tau)} \leq L^\tau$. Then, leveraging the definition of $L$ and taking the $\limsup$, the result~\eqref{eq:result_fixedpoint} follows.
\end{proof}

\begin{remark}
If a perfect knowledge of the maps is available,~\eqref{eq:result_fixedpoint} boils down to results similar to~\cite{Simonetto17} for the running Mann-Krasnosel'skii method. If, in addition, $\sigma = 0$, then we recover classic results for \emph{static} fixed points. 
\end{remark}

The result~\eqref{eq:result_fixedpoint} will be revisited in the ensuing section for the case of asynchronous and distributed implementations.  

\section{Asynchronous and Distributed Algorithm}
\label{sec:asynchronous}

\subsection{Distributed Computation}


Consider a network of agents and let $\cG = (\cN, \cA)$ be a \emph{dependency graph} where $\cN := \{1, \ldots, N\}$ is the set of agents and $\cA$ is the set of directed edges, which represent information exchanges that are required in order to perform iteration \eqref{eqn:fp_algo}~\cite{BeT89}. In particular, each agent $i$ updates a portion $\bx_i^{(t+1)} \in \mathbb{R}^{m_i}$ of the vector $\bx^{(t+1)}$ based on a (local) map $\tilde{f}^{(t)}_i$, the current sub-vector $\bx_i^{(t)}$, and the sub-vectors of some other agents. Therefore, a directed edge $(j, i) \in \cA$ exists if the function $\tilde{f}^{(t)}_i$ depends on $\bx_j$ for all $t$. Notice that $\tilde{f}^{(t)} = [(\tilde{f}^{(t)}_1)^\sfT, \ldots, (\tilde{f}^{(t)}_N)^\sfT]^\sfT$ and $\sum_{i = 1}^N m_i = m$ for consistency. Similar to the previous section, each noise-free function $f^{(t)}_i$ is assumed Lipshitz continuous with a given constant $L_i > 0$. Accordingly, let
\begin{equation}
\cN_i := \left \{j \in \cN : \, (j, i) \in \cA \right\}
\end{equation}
denote the set of nodes that are neighbors of node $i$. With these definitions, the iteration \eqref{eqn:fp_algo} can be equivalently written as
\begin{equation} \label{eqn:fp_algo_dist}
\bx^{(t+1)}_i = \tilde{f}^{(t)}_i \left(\{\bx^{(t)}_j\}_{j \in \cN_i \cup \{ i\}} \right), \quad  i \in \cN.
\end{equation}
In this case, the computation can be distributed across agents, provided that the variables from the neighboring nodes are shared. This paper focuses 
on the case where the communication structure does not change with time (i.e., the dependency graph  $\cG$ is not time varying), although extensions to time-varying dependency graphs will be explored in future research efforts.

In case of ideal communications,~\eqref{eqn:fp_algo_dist} inherits the convergence properties of Theorem~\ref{thm:result_fixedpoint}. An asynchronous implementation is addressed next.

\subsection{Asynchronous Computation}
The following setting is  considered:
\begin{itemize}
\item At each node $i$, the variables $\{\bx^{(t)}_j\}_{j \in \cN_i}$ of the neighboring agents might be \emph{delayed} due to communication constraints or might even be lost due to packets drops;
\item The computational time across agents is similar, and it is negligible relative to communication latencies. 
\end{itemize}
In this setting,  the algorithmic update~\eqref{eqn:fp_algo_dist} is re-written as:
\begin{equation} \label{eqn:fp_algo_dist_async}
\bx^{(t+1)}_i = \tilde{f}^{(t)}_i \left(\bx^{(t)}_i, \{\tilde{\bx}^{(t)}_j\}_{j \in \cN_i} \right), \quad i \in \cN,
\end{equation}
where
\begin{equation}
\tilde{\bx}^{(t)}_j = \bx^{(D^{(t)}_{i, j})}_j
\end{equation}
for some $D^{(t)}_{i, j} \in \{1, \ldots, t\}$. If $D^{(t)}_{i, j} < t$, then $\tilde{\bx}^{(t)}_j$ is an outdated copy of the variable associated with agent $j$. 

\begin{remark}
The considered setting does not capture the case where  agents exhibit different computational capabilities.  The analysis of a more general case where asynchronous updates are due to both communication constraints and different computational times is the subject of ongoing work. 

\end{remark}

Convergence of the asynchronous algorithm \eqref{eqn:fp_algo_dist_async} is analyzed next. First, a bounded delay is assumed. 

\begin{assumption} \label{asm:delay}
Define the worst-case communication delay as
\begin{equation}
T_d := \max_{i \in \cN, j \in \cN_i}\sup_{t \geq 1} \left \{t -  D^{(t)}_{i, j}\right\}
\end{equation}
and assume that $T_d$ is bounded; that is, $T_d < \infty$.
\end{assumption}

The following result holds. 

\begin{theorem} \label{thm:async_inf}
Suppose that the norm used is the $\ell_\infty$ norm. Then, under Assumptions \ref{asm:contr},  \ref{asm:sigma}, \ref{asm:approx}, and \ref{asm:delay},	the tracking error $\|\bx^{(t)} - \bx^{(*, t)} \|_\infty$ can be asymptotically bounded as:
\begin{equation}
\limsup_{t \rightarrow \infty} \|\bx^{(t)} - \bx^{(*, t)} \|_\infty \leq \frac{e_f + \sigma(1 + L T_d)}{1 - L}.
\end{equation}
\end{theorem}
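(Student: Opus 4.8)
The plan is to reduce the asynchronous block recursion~\eqref{eqn:fp_algo_dist_async} to a single scalar recursion of ``contraction--with--delay'' type, and then to analyze that scalar recursion. Write $d^{(t)} := \|\bx^{(t)} - \bx^{(*,t)}\|_\infty$ for the quantity of interest. The key structural fact exploited throughout is that the $\ell_\infty$ norm is block-decomposable along the partition $\bx = [\bx_1^\sfT, \dots, \bx_N^\sfT]^\sfT$: hence the contraction bound of Assumption~\ref{asm:contr}(ii) and the approximation bound of Assumption~\ref{asm:approx} both hold block-wise, i.e.\ for every agent $i$ one has $\|f_i^{(t)}(\bx) - f_i^{(t)}(\bx')\|_\infty \le L^{(t)} \|\bx - \bx'\|_\infty$ and $\|f_i^{(t)}(\bx) - \tilde f_i^{(t)}(\bx)\|_\infty \le e_f^{(t)}$, with the \emph{same} constants as in the aggregate bounds.

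The first step is a block-wise, delay-aware version of the estimate~\eqref{eqn:inexact_bound}. For each agent $i$ and time $t$, assemble a \emph{virtual input} $\hat\bx^{(t,i)} \in \reals^m$ whose $i$-th block equals $\bx_i^{(t)}$, whose $j$-th block equals the delayed copy $\bx_j^{(D_{i,j}^{(t)})}$ for $j \in \cN_i$, and whose remaining blocks equal $\bx_\ell^{(*,t)}$. Since $\tilde f_i^{(t)}$ depends only on the blocks indexed by $\cN_i \cup \{i\}$, the update~\eqref{eqn:fp_algo_dist_async} can be written as $\bx_i^{(t+1)} = \tilde f_i^{(t)}(\hat\bx^{(t,i)})$, while $\bx_i^{(*,t)} = f_i^{(t)}(\bx^{(*,t)})$. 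Adding and subtracting $f_i^{(t)}(\hat\bx^{(t,i)})$ and invoking the two block-wise bounds gives
\[
\|\bx_i^{(t+1)} - \bx_i^{(*,t)}\|_\infty \le e_f^{(t)} + L^{(t)} \, \|\hat\bx^{(t,i)} - \bx^{(*,t)}\|_\infty .
\]
Next I bound $\|\hat\bx^{(t,i)} - \bx^{(*,t)}\|_\infty$, which by construction is the maximum over $j \in \cN_i \cup \{i\}$ of the per-block discrepancies. The $i$-th block contributes at most $d^{(t)}$; for $j \in \cN_i$, writing $D := D_{i,j}^{(t)}$ and using $t - D \le T_d$ (Assumption~\ref{asm:delay}) together with the decomposition $\bx_j^{(D)} - \bx_j^{(*,t)} = (\bx_j^{(D)} - \bx_j^{(*,D)}) + (\bx_j^{(*,D)} - \bx_j^{(*,t)})$, the first summand is at most $d^{(D)}$ and the second telescopes via~\eqref{eqn:sigma} to at most $\sum_{s=D}^{t-1}\sigma^{(s)} \le T_d \sigma$. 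Setting $M^{(t)} := \max_{\max(1,\,t-T_d) \le s \le t} d^{(s)}$, this yields $\|\hat\bx^{(t,i)} - \bx^{(*,t)}\|_\infty \le M^{(t)} + T_d\sigma$.

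Combining the two displays, maximizing over $i$, appending the term $\|\bx^{(*,t+1)} - \bx^{(*,t)}\|_\infty \le \sigma^{(t)}$ by the triangle inequality, and using $L^{(t)} \le L$, $e_f^{(t)} \le e_f$, $\sigma^{(t)} \le \sigma$, produces the scalar recursion
\[
d^{(t+1)} \le a + L\, M^{(t)}, \qquad a := e_f + \sigma(1 + L T_d) .
\]
It remains to show $\limsup_t d^{(t)} \le a/(1-L) =: C$. I would first establish boundedness: with $B_0 := \max_{1 \le s \le T_d+1} d^{(s)}$ (finite, being a maximum of finitely many norms of fixed vectors), a strong induction shows $d^{(t)} \le \max(B_0, C)$ for all $t$, since $M^{(t)} \le \max(B_0,C)$ forces $d^{(t+1)} \le a + L\max(B_0,C) \le \max(B_0,C)$, using the identities $a + LC = C$ and $a + L B_0 \le B_0$ (the latter holding precisely when $B_0 \ge C$). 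Hence $B := \limsup_t d^{(t)} < \infty$. Finally, given $\varepsilon > 0$, choose $T$ with $d^{(s)} \le B + \varepsilon$ for all $s \ge T$; then $M^{(t)} \le B + \varepsilon$ for $t \ge T + T_d$, so $d^{(t+1)} \le a + L(B+\varepsilon)$, and taking $\limsup$ followed by $\varepsilon \downarrow 0$ gives $B \le a + LB$, i.e.\ $B \le a/(1-L)$, which is the asserted bound.

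The first two steps are essentially a careful componentwise and delay-aware re-run of the proof of Theorem~\ref{thm:result_fixedpoint}, and the only delicate point there is the bookkeeping that makes the $\ell_\infty$ norm split cleanly across agents --- which is exactly why the theorem is restricted to the $\ell_\infty$ norm. The main obstacle is the last step: because the recursion involves $M^{(t)}$, the maximum of the last $T_d+1$ errors rather than a single previous error, one cannot simply unroll a geometric recursion, so the $\limsup$ estimate must be preceded by a separate boundedness argument; a little extra care is also needed at small indices $t \le T_d$, where the delay window degenerates to $\{1,\dots,t\}$.
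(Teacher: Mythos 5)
Your proposal is correct and follows essentially the same route as the paper: block-wise $\ell_\infty$ contraction of each $f_i^{(t)}$, the triangle-inequality bookkeeping that converts the delay $t - D_{i,j}^{(t)}\le T_d$ into the extra $\sigma T_d$ drift term, and reduction to a scalar recursion with a delayed argument. The only differences are cosmetic: you carry $e_f$ explicitly through the recursion (the paper handles it by remarking that the inexactness can be absorbed as in~\eqref{eqn:inexact_bound}), and you close the argument with a self-contained boundedness-plus-$\limsup$ analysis of the windowed-max recursion $d^{(t+1)}\le a + L M^{(t)}$, whereas the paper invokes its Appendix Lemma~\ref{lem:geom} on the equivalent form $d^{(t)}\le a + L\, d^{(t-\delta^{(t)})}$.
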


\vspace{.2cm}

\begin{proof}
Notice first that the error introduced  by  the inexact mapping $\tilde{f}^{(t)}$ (rather than using $f^{(t)}$) can be bounded using steps similar to \eqref{eqn:inexact_bound}; therefore, we next focus on the case where the exact mapping is used. For any $i \in \cN$, let
\begin{equation}
\bz_i^{(t)} :=  \left (\bx_i^{(t)}, \{ \bx_j^{(D^{(t)}_{i,j})}\}_{j \in \cN_i} \right)
\end{equation}
denote the collection of variables at which the mapping of node $i$ is evaluated at time $t$. Then, using the $\ell_\infty$ norm, and noticing that $L_i \leq L$,  one has that
\begin{align}
&\|\bx^{(t)} - \bx^{(*, t-1)} \| = \max_{i \in \cN} \left \{ \left \|f^{(t-1)}_i (\bz_i^{(t-1)}) - \bx_i^{(*, t-1)}  \right\|  \right \} \nonumber \\
&=  \max_{i \in \cN} \left \{\left \|f^{(t-1)}_i (\bz_i^{(t-1)}) - f^{(t-1)}_i(\bz^{(*, t-1)})  \right\| \right\} \nonumber
\end{align}
\begin{align}
&\leq \max_{i \in \cN} L_i \Big [ \max \Big \{ \left\|\bx^{(t-1)}_i - \bx_i^{(*, t-1)} \right\|, \nonumber\\
& \qquad \qquad \Big \{\left\|\bx_j^{(D^{(t-1)}_{i,j})} - \bx_j^{(*, t-1)} \right \| \Big \}_{j \in \cN_i} \Big\} \Big ]\nonumber \\
&\leq L \max \Big \{\left\|\bx^{(t-1)} - \bx^{(*, t-1)} \right\|, \nonumber\\
& \qquad \qquad \max_{i \in \cN, j \in \cN_i}\Big \{\left\|\bx^{(D^{(t-1)}_{i,j})} - \bx^{(*, t-1)} \right \| \Big \}  \Big\}. \label{eqn:bound_async}
\end{align}
Next, the following inequalities can be obtained 
\begin{align}
& \left\|\bx^{(D^{(t-1)}_{i,j})} - \bx^{(*, t-1)} \right \|  \nonumber \\
& \leq \left\|\bx^{(D^{(t-1)}_{i,j})} - \bx^{(*, D^{(t-1)}_{i,j})} \right \| + \sum_{\ell = D^{(t-1)}_{i,j}}^{t-1} \| \bx^{(*, \ell)} - \bx^{(*, \ell-1)}\| \nonumber \\
& \leq \left\|\bx^{(D^{(t-1)}_{i,j})} - \bx^{(*, D^{(t-1)}_{i,j})} \right \| + \sigma T_d, \label{eqn:integral_sigma}
\end{align}
where the last inequality follows from Assumption \ref{asm:sigma} and Assumption \ref{asm:delay}. By plugging this last inequality into \eqref{eqn:bound_async}, one obtains
\begin{align*}
&\|\bx^{(t)} - \bx^{(*, t-1)} \| \leq L \max \Big \{\left\|\bx^{(t-1)} - \bx^{(*, t-1)} \right\|, \nonumber\\
& \qquad \qquad \max_{i \in \cN, j \in \cN_i}\Big \{\left\|\bx^{(D^{(t-1)}_{i,j})} - \bx^{(*, D^{(t-1)}_{i,j})} \right \| + \sigma T_d \Big \}  \Big\}, 
\end{align*}
which, in turn, yields
\begin{align*}
&\|\bx^{(t)} - \bx^{(*, t)} \| \leq \sigma + L \max \Big \{\left\|\bx^{(t-1)} - \bx^{(*, t-1)} \right\|, \nonumber\\
& \qquad \qquad \max_{i \in \cN, j \in \cN_i}\Big \{\left\|\bx^{(D^{(t-1)}_{i,j})} - \bx^{(*, D^{(t-1)}_{i,j})} \right \| + \sigma T_d \Big \}  \Big\}
\end{align*}
by using the triangle inequality and Assumption \ref{asm:sigma}. In other words, there exists 
$\delta^{(t)} \in \{1, \ldots, T_d \}$ such that
\begin{align*}
\|\bx^{(t)} - \bx^{(*, t)} \| &\leq \sigma + L \left (\|\bx^{(t - \delta^{(t)})} - \bx^{(*, t - \delta^{(t)})} \| +   \sigma T_d\right) \\
& = L \|\bx^{(t - \delta^{(t)})} - \bx^{(*, t - \delta^{(t)})} \| + \sigma (1 + L T_d),
\end{align*}
and the result follows by Lemma \ref{lem:geom} in the Appendix.
\end{proof}
The following result follows from the well-known equivalence of norms.
\begin{corollary} \label{cor:ell2}
Suppose that the norm used  is the $\ell_2$ norm, and that Assumptions \ref{asm:contr}, \ref{asm:sigma}, \ref{asm:approx}, and \ref{asm:delay} hold. 
If, in addition, $L < \frac{1}{\sqrt{m}}$, then
\begin{equation}
\limsup_{t \rightarrow \infty} \|\bx^{(t)} - \bx^{(*, t)} \|_\infty \leq \frac{e_f + \sigma(1 + L \sqrt{m} T_d)}{1 - L\sqrt{m}}.
\end{equation}
\end{corollary}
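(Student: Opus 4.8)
The plan is to reduce the $\ell_2$ statement to the $\ell_\infty$ statement of Theorem~\ref{thm:async_inf} via the elementary norm inequalities $\|\bv\|_\infty \le \|\bv\|_2 \le \sqrt{m}\,\|\bv\|_\infty$, valid for every $\bv\in\reals^m$. The observation is that the iteration \eqref{eqn:fp_algo_dist_async} itself presupposes no norm; only the analysis --- through Assumptions~\ref{asm:contr}--\ref{asm:approx} --- is norm-dependent, and each of those assumptions can be restated in a different norm at the price of worse constants.

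First I would translate each hypothesis into the $\ell_\infty$ norm. The self-map condition of Assumption~\ref{asm:contr}(i) and the bounded-delay condition of Assumption~\ref{asm:delay} are norm-free and carry over verbatim. For the Lipschitz property, chaining the two inequalities above gives, for all $\bx,\bx'\in\cD$,
\begin{align*}
\|f^{(t)}(\bx)-f^{(t)}(\bx')\|_\infty &\le \|f^{(t)}(\bx)-f^{(t)}(\bx')\|_2 \le L^{(t)}\|\bx-\bx'\|_2 \\
&\le \sqrt{m}\,L^{(t)}\,\|\bx-\bx'\|_\infty ,
\end{align*}
so $f^{(t)}$ is an $\ell_\infty$-contraction with modulus $\hat L^{(t)} := \sqrt{m}\,L^{(t)}$ and $\hat L := \sup_{t\ge 1}\hat L^{(t)} = \sqrt{m}\,L$. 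Likewise, Assumption~\ref{asm:approx} gives $\|f^{(t)}(\bx)-\tilde f^{(t)}(\bx)\|_\infty \le \|f^{(t)}(\bx)-\tilde f^{(t)}(\bx)\|_2 \le e_f^{(t)}$, so the $\ell_\infty$ approximation error $\hat e_f$ obeys $\hat e_f \le e_f$, and Assumption~\ref{asm:sigma} gives $\hat\sigma := \sup_{t\ge 1}\|\bx^{(\ast,t+1)}-\bx^{(\ast,t)}\|_\infty \le \sigma$. The fixed points $\bx^{(\ast,t)}$ are unchanged, being defined by the norm-free equation $\bx = f^{(t)}(\bx)$.

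Next I would invoke the additional hypothesis $L < 1/\sqrt{m}$, which is precisely what guarantees $\hat L = \sqrt{m}\,L < 1$, i.e.\ that the $\ell_\infty$ counterpart of Assumption~\ref{asm:contr} is in force. At that point all four assumptions of Theorem~\ref{thm:async_inf} hold with the data $(\hat L,\hat e_f,\hat\sigma,T_d)$, and the theorem directly yields $\limsup_{t\to\infty}\|\bx^{(t)}-\bx^{(\ast,t)}\|_\infty \le (\hat e_f + \hat\sigma(1+\hat L T_d))/(1-\hat L)$. Finally, using that this expression is nondecreasing in $\hat e_f$ (coefficient $1/(1-\hat L)>0$) and in $\hat\sigma$ (coefficient $(1+\hat L T_d)/(1-\hat L)>0$), I would substitute $\hat L = \sqrt{m}\,L$ and replace $\hat e_f\le e_f$, $\hat\sigma\le\sigma$ to reach the claimed bound $(e_f + \sigma(1+L\sqrt{m}\,T_d))/(1-L\sqrt{m})$.

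There is no real obstacle here; the only thing requiring care is the bookkeeping on the two norm inequalities --- the factor $\sqrt{m}$ inflates the contraction modulus (hence the need for $L<1/\sqrt{m}$), whereas it leaves $e_f$ and $\sigma$ untouched because there only the free direction $\|\cdot\|_\infty\le\|\cdot\|_2$ is used --- together with the (immediate) monotonicity check that legitimizes passing from $\hat e_f,\hat\sigma$ to the larger $e_f,\sigma$.
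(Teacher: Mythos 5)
Your proposal is correct and follows essentially the same route as the paper: both chain $\|\cdot\|_\infty\le\|\cdot\|_2$ with the $\ell_2$-Lipschitz bound and $\|\cdot\|_2\le\sqrt{m}\,\|\cdot\|_\infty$ to conclude that $f^{(t)}$ is an $\ell_\infty$-contraction with modulus $L\sqrt{m}$ when $L<1/\sqrt{m}$, and then invoke Theorem~\ref{thm:async_inf}. Your additional bookkeeping (carrying $e_f$ and $\sigma$ over to the $\ell_\infty$ norm and checking monotonicity of the bound) is implicit in the paper's terser argument but does not change the approach.
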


\vspace{.2cm}

\begin{proof}
Since $f^{(t)}$ is a contraction with respect to $\ell_2$ norm, one has that
\begin{align*}
\|f^{(t)}(\bx) - f^{(t)}(\bx')\|_\infty &\leq \|f^{(t)}(\bx) - f^{(t)}(\bx')\|_2 \\
&\leq L\|\bx -\bx'\|_2 \\
&\leq L\sqrt{m} \|\bx -\bx'\|_\infty.
\end{align*}
Therefore, $f^{(t)}$ is a contraction with respect to $\ell_\infty$ norm if $L < 1/\sqrt{m}$. The result then follows by Theorem \ref{thm:async_inf}.
\end{proof}

However, the bound of Corollary \ref{cor:ell2} represents a worst case and the bound is not necessarily tight. Therefore, a refined result is provided in the following.

\begin{theorem} \label{thm:ell2}
Let
\begin{equation}
N_d := \sup_{t \geq 1} \max_{i \in \cN} \sum_{j \in \cN_i} \mathbb{I} \{D_{i, j}^{(t)} < t \} \in [0, m-1]
\end{equation}
denote the maximum number of variables that are outdated at any given time step at any node. 
Suppose that the norm used  throughout is the $\ell_2$ norm, and that Assumptions \ref{asm:contr},  \ref{asm:sigma}, \ref{asm:approx}, and \ref{asm:delay} are satisfied. 
If, in addition, $L < \frac{1}{\sqrt{N_d + 1}}$, then
\begin{equation}
\limsup_{t \rightarrow \infty} \|\bx^{(t)} - \bx^{(*, t)} \|_2 \leq \frac{e_f + \sigma(1 + L \sqrt{N_d+1} T_d)}{1 - L\sqrt{N_d+1}}.
\end{equation}
\end{theorem}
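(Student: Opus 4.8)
The plan is to follow the template of the proof of Theorem~\ref{thm:async_inf}, but to carry the $\ell_2$ norm through a per-node decomposition that isolates the (at most $N_d$) outdated blocks read by each node; the net effect is to replace the contraction factor $L$ by an ``effective'' factor $L\sqrt{N_d+1}$, which must be $<1$.

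As a preliminary step I would dispose of the inexactness exactly as at the start of the proof of Theorem~\ref{thm:async_inf}: passing from $\tilde f^{(t)}_i$ to the exact $f^{(t)}_i$ in \eqref{eqn:fp_algo_dist_async} costs only an additive term bounded by $e_f$ per update (cf.\ the chain leading to \eqref{eqn:inexact_bound}), which reappears as the $e_f$ contribution in the final estimate; so I work with the exact maps. For the core step, fix $t$ and a node $i$, let $\bz_i^{(t-1)} := (\bx_i^{(t-1)}, \{\bx_j^{(D_{i,j}^{(t-1)})}\}_{j\in\cN_i})$ be the tuple at which $f_i^{(t-1)}$ is evaluated and $\bz_i^{(*,t-1)}$ the corresponding sub-vector of $\bx^{(*,t-1)}$. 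Since $f^{(t-1)}$ is an $\ell_2$-contraction with constant $\le L$, so is each block map $f_i^{(t-1)}$, whence $\|\bx_i^{(t)} - \bx_i^{(*,t-1)}\|_2 \le L\,\|\bz_i^{(t-1)} - \bz_i^{(*,t-1)}\|_2$. I would then partition the coordinates of $\bz_i^{(t-1)}$ into (a) block $i$ together with the neighbor blocks delivered on time — jointly a sub-vector of $\bx^{(t-1)}$ — and (b) the at most $N_d$ outdated blocks; expanding $\|\bz_i^{(t-1)} - \bz_i^{(*,t-1)}\|_2^2$ as a sum of at most $N_d+1$ nonnegative group terms and applying $\sqrt{a_1+\dots+a_k} \le \sqrt{k}\,\max_\ell \sqrt{a_\ell}$ gives $\|\bz_i^{(t-1)} - \bz_i^{(*,t-1)}\|_2 \le \sqrt{N_d+1}\,\max\{\|\bx^{(t-1)} - \bx^{(*,t-1)}\|_2,\ \max_{j:\,D_{i,j}^{(t-1)}<t-1}\|\bx_j^{(D_{i,j}^{(t-1)})} - \bx_j^{(*,t-1)}\|_2\}$.

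Next I would bound the fixed-point drift exactly as in \eqref{eqn:integral_sigma}: with $D := D_{i,j}^{(t-1)} \in \{t-1-T_d,\dots,t-1\}$ (Assumption~\ref{asm:delay}), the triangle inequality over consecutive fixed points and Assumption~\ref{asm:sigma} give $\|\bx_j^{(D)} - \bx_j^{(*,t-1)}\|_2 \le \|\bx_j^{(D)} - \bx_j^{(*,D)}\|_2 + \sigma T_d$. Re-assembling the per-node bounds into an estimate of $\|\bx^{(t)} - \bx^{(*,t-1)}\|_2 = (\sum_i \|\bx_i^{(t)} - \bx_i^{(*,t-1)}\|_2^2)^{1/2}$, and then passing from $\bx^{(*,t-1)}$ to $\bx^{(*,t)}$ by one further triangle inequality and Assumption~\ref{asm:sigma} (adding a last $\sigma$), I would arrive — exactly as in the proof of Theorem~\ref{thm:async_inf} — at a recursion of the form $\|\bx^{(t)} - \bx^{(*,t)}\|_2 \le L\sqrt{N_d+1}\,\|\bx^{(t-\delta^{(t)})} - \bx^{(*,t-\delta^{(t)})}\|_2 + e_f + \sigma(1 + L\sqrt{N_d+1}\,T_d)$ for some $\delta^{(t)} \in \{1,\dots,T_d\}$. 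Since $L\sqrt{N_d+1} < 1$ by hypothesis, Lemma~\ref{lem:geom} applied to this recursion yields the claimed asymptotic bound.

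The main obstacle is precisely the re-assembly just mentioned: passing from the per-node estimates to a bound on the full-vector quantity $(\sum_i \|\bx_i^{(t)} - \bx_i^{(*,t-1)}\|_2^2)^{1/2}$. In the $\ell_\infty$ setting of Theorem~\ref{thm:async_inf} this is immediate, since the outer norm is itself a maximum over nodes and the per-node bounds compose for free; in $\ell_2$ one must sum squared block errors and take care not to incur a factor growing with the number of agents or with the out-degree of the dependency graph. The point where the structural hypotheses must really be used is to keep the on-time contributions grouped as disjoint coordinate blocks of the single vector $\bx^{(t-1)}$ — exploiting that $\{\bx_i\}_{i\in\cN}$ partitions $\bx$ — rather than bounding each on-time neighbor block individually, so that those contributions telescope back into $\|\bx^{(t-1)} - \bx^{(*,t-1)}\|_2$ and only the $\le N_d$ outdated blocks per node are responsible for the $\sqrt{N_d+1}$ inflation; the remaining ingredients (inexactness reduction, drift telescoping, and the geometric recursion via Lemma~\ref{lem:geom}) are routine given Theorem~\ref{thm:async_inf}.
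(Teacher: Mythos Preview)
Your plan correctly mirrors the skeleton of the proof of Theorem~\ref{thm:async_inf}, and the handling of the inexactness, the drift telescoping via \eqref{eqn:integral_sigma}, and the closing appeal to Lemma~\ref{lem:geom} are all fine. The gap is precisely the re-assembly step you flag, and the fix you sketch does not close it.

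Concretely: from $\|\bx_i^{(t)} - \bx_i^{(*,t-1)}\|_2 \le L\,\|\bz_i^{(t-1)} - \bz_i^{(*,t-1)}\|_2$ you must recover $\|\bx^{(t)} - \bx^{(*,t-1)}\|_2^2 = \sum_{i\in\cN}\|\bx_i^{(t)} - \bx_i^{(*,t-1)}\|_2^2$. Summing your per-node bounds gives $L^2 \sum_i \|\bz_i^{(t-1)} - \bz_i^{(*,t-1)}\|_2^2$, and here the ``on-time'' parts do \emph{not} telescope to a single $\|\bx^{(t-1)}-\bx^{(*,t-1)}\|_2^2$: a block $j$ can be a neighbor of several nodes $i$, so its on-time contribution is counted once for each such $i$. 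The $\{\bx_i\}$ partition $\bx$, but the $\{\bz_i\}$ overlap. The net effect is an unavoidable multiplicative factor tied to $N$ (or to the in-degrees of $\cG$), not to $N_d$, so you land on an effective contraction factor like $L\sqrt{N}\sqrt{N_d+1}$ rather than $L\sqrt{N_d+1}$.

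The paper avoids this by \emph{not} using the crude bound $L_i\le L$ per block. Instead it invokes per-block Lipschitz constants $L_i$ for the components $f_i^{(t)}$ with $\sum_{i\in\cN} L_i^2 = L^2$, and combines the block estimates via
\[
\sum_{i\in\cN} L_i^2\,\big[\text{terms}_i\big]\ \le\ \Big(\sum_{i\in\cN} L_i^2\Big)\,\max_{i\in\cN}\big[\text{terms}_i\big]\ =\ L^2\,\max_{i\in\cN}\big[\text{terms}_i\big].
\]
This is exactly the device that kills the $N$-dependence; the $\sqrt{N_d+1}$ then enters only when bounding $\max_i[\text{terms}_i]$ by counting the at most $N_d$ outdated blocks plus the up-to-date part. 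Your proposal is missing this per-block Lipschitz decomposition, and without it the recursion you aim for does not follow.
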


\begin{proof}
Since $f^{(t)}$ is Lipschitz with respect to $\ell_2$ norm with coefficient $L$, there exist constants $\{L_i\}_{i \in \cN}$ such that for every $\bx, \bx' \in \cD$, we have that
\begin{equation}
\|f^{(t)}_i(\bx) - f^{(t)}_i(\bx')\| \leq  L_i \| \bx - \bx'\|
\end{equation}
such that $\sum_{i \in \cN} L_i^2 = L^2$.
Similarly to the proof of Theorem \ref{thm:async_inf}, we have that
\begin{align}
&\|\bx^{(t)} - \bx^{(*, t-1)} \|^2 = \sum_{i \in \cN} \left \{ \left \|f^{(t-1)}_i (\bz_i^{(t-1)}) - \bx_i^{(*, t-1)}  \right\|^2  \right \} \nonumber \\
&=  \sum_{i \in \cN} \left \{\left \|f^{(t-1)}_i (\bz_i^{(t-1)}) - f^{(t-1)}_i(\bx^{(*, t-1)})  \right\|^2 \right\} \nonumber\\
&\leq   \sum_{i \in \cN} L_i^2\Big [ \left\|\bx^{(t-1)}_i - \bx_i^{(*, t-1)} \right\|^2 \nonumber\\
& \qquad \qquad + \sum_{j \in \cN_i} \left\|\bx_j^{(D^{(t-1)}_{i,j})} - \bx_j^{(*, t-1)} \right \|^2  \Big ]\nonumber \\
&\leq   \left(\sum_{i \in \cN} L_i^2 \right ) \max_{i \in \cN}\Big [ \left\|\bx^{(t-1)}_i - \bx_i^{(*, t-1)} \right\|^2 \nonumber\\
& \qquad \qquad + \sum_{j \in \cN_i} \left\|\bx_j^{(D^{(t-1)}_{i,j})} - \bx_j^{(*, t-1)} \right \|^2  \Big ]\nonumber 
\end{align}
\begin{align}
&\leq  L^2 \Big ( \left\|\bx^{(t-1)} - \bx^{(*, t-1)} \right\|^2 \nonumber\\
& \qquad  + \max_{i \in \cN} \sum_{j \in \cN_i: \, N_{i, j}^{(t-1)} < t-1}\Big \{\left\|\bx^{(D^{(t-1)}_{i,j})} - \bx^{(*, t-1)} \right \|^2 \Big \}  \Big\} \nonumber\\
&\leq L^2 (N_d + 1) \max \Big \{  \left\|\bx^{(t-1)} - \bx^{(*, t-1)} \right\|^2, \nonumber\\
& \qquad  \max_{i \in \cN, j \in \cN_i:\, N_{i, j}^{(t-1)} < t-1}\Big \{\left\|\bx^{(D^{(t-1)}_{i,j})} - \bx^{(*, t-1)} \right \|^2 \Big \}   \Big \}.
\label{eqn:bound_async_2}
\end{align}
By taking the square root of both sides of \eqref{eqn:bound_async_2} and using \eqref{eqn:integral_sigma}, we obtain that
\begin{align}
&\|\bx^{(t)} - \bx^{(*, t-1)} \| \leq L \sqrt{N_d + 1} \max \Big \{  \left\|\bx^{(t-1)} - \bx^{(*, t-1)} \right\| \nonumber\\
& \hspace{-.2cm} \max_{i \in \cN, j \in \cN_i:\, N_{i, j}^{(t-1)} < t-1}\Big \{ \left\|\bx^{(D^{(t-1)}_{i,j})} - \bx^{(*, D^{(t-1)}_{i,j})} \right \| + \sigma T_d \Big \}   \Big \}. \hspace{-.2cm}
\label{eqn:bound_async_3}
\end{align}
Thus, there exists 
$\tilde{\delta}^{(t)} \in \{1, \ldots, T_d \}$ such that: 
\begin{align*}
&\|\bx^{(t)} - \bx^{(*, t-1)} \| \\
& \quad \leq L \sqrt{1 + N_d} \left ( \|\bx^{(t - \tilde{\delta}^{(t)})} - \bx^{(*, t - \tilde{\delta}^{(t)})} \| + \sigma T_d \right ), 
\end{align*}
and the proof is completed similarly to that of Theorem \ref{thm:async_inf}.
\end{proof}

\section{Examples}
\label{sec:examples}

\subsection{Feedback-based gradient  methods}
\label{sec:feedback}

Consider the following online projected gradient algorithm associated with a time-varying constrained optimization problem: 
\begin{equation}
\bx^{(t+1)} = \proj_{\cX^{(t)} \times \mathbb{R}^m_+} \left \{\bx^{(t)} - \alpha \bPhi^{(t)}(\bx^{(t)}) \right\}
\end{equation}
where $\bx^{(t)}$ is in this case a vector stacking the primal and dual variables; $\cX^{(t)}$ is a time-varying convex and compact set; $\alpha > 0$ is the step size; and $\bPhi^{(t)}$ is a strongly monotone and Lipschitz map associated with a regularized Lagrangian function (see~\cite{Koshal11,opfPursuit,Andreani11} for a detailed description). For example, if the problem features a constraint in the form $g_i^{(t)}(\bu) \leq 0$, with $\bu$ denoting the primal variables, then one entry of the map $\bPhi^{(t)}$ is $- g_i^{(t)}(\bu^{(t)})$. Suppose that the function $g_i^{(t)}(\bu^{(t)})$ represents a measurable quantity, and let $\hat{g}^{(t)}$ be a measurement of $g_i^{(t)}(\bu^{(t)})$ at time $t$. The main idea behind feedback-based online optimization methods is to replace the function $g_i^{(t)}(\bu^{(t)})$ with the measurement $\hat{g}^{(t)}$ in the map $\bPhi^{(t)}$~\cite{opfPursuit}. With this change, as well as by possibly replacing $\bu^{(t)}$ with its measurements, one can define an approximate map $\tilde{\bPhi}^{(t)}$, where the error  $\|\tilde{\bPhi}^{(t)}(\bx^{(t)}) - \bPhi^{(t)}(\bx^{(t)})\|$ introduced by the measurement noise and model mismatches can be bounded uniformly in time.  

To concretely outline an illustrative example while respecting space limitations, consider the simplified setting of a regularized projected gradient method with a \emph{strongly smooth} convex objective function $g^{(t)}$ with parameter $M$; namely, for all $\bx, \bx' \in \reals^m$, we have $\|\nabla g^{(t)} (\bx) - \nabla g^{(t)} (\bx')\| \leq M \|\bx -\bx' \|$. In this case,
\begin{equation}
\label{eq:gradient_example}
f^{(t)}(\bx) := \proj_{\cX^{(t)}} \left \{\bx - \alpha \left(\nabla g^{(t)} (\bx) + \eta \bx  \right) \right\},
\end{equation}
where $\eta > 0$ is a regularization parameter.
In this case, an approximate map would be $\tilde{f}^{(t)}(\bx) := \proj_{\cX^{(t)}} \left \{\bx - \alpha \hat{g}^{(t)}  \right\}$, with $\hat{g}^{(t)}$ an estimate or measurement of  $\nabla g^{(t)} (\bx^{(t)}) + \eta \bx^{(t)}$. It is well known that $f^{(t)}$ is Lipschitz with constant $L := \max \{|1 - \alpha \eta|, |1 - \alpha (M + \eta)| \}$ on $\reals^m$ in the $\ell_2$ norm; see, e.g., \cite{monotonePrimer}. To satisfy the condition of Theorem \ref{thm:ell2}, it is required that
\[
\max \{|1 - \alpha \eta|, |1 - \alpha (M + \eta)| \} < \frac{1}{\sqrt{N_d + 1}}.
\]
This yields the following set of conditions:
\begin{align*}
&1 - \alpha (M + \eta) \geq -\frac{1}{\sqrt{N_d + 1}} \\
&1 - \alpha \eta \leq \frac{1}{\sqrt{N_d + 1}} \\
\end{align*}
and these conditions are equivalent to
\begin{equation} \label{eqn:alpha_cond}
\frac{1}{\eta}\left(1 - \frac{1}{\sqrt{N_d + 1}}\right) \leq \alpha \leq \frac{1}{M + \eta}\left(1 + \frac{1}{\sqrt{N_d + 1}}\right).
\end{equation}
To make the set in \eqref{eqn:alpha_cond} nonempty, we require that
\begin{equation}
\frac{\sqrt{N_d + 1} - 1}{\sqrt{N_d + 1} + 1} := \kappa < \frac{\eta}{M + \eta} 
\end{equation}
which gives a lower bound on the regularization parameter as
\begin{equation} \label{eqn:eta_cond}
\eta > \frac{\kappa}{1 - \kappa} M = \frac{\sqrt{N_d + 1} - 1}{2} M.
\end{equation}
Note that, in the synchronous case, $N_d = 0$, and therefore condition \eqref{eqn:eta_cond} reduces to $\eta > 0$ as expected.

\subsection{Multi-Area Load Flow in Power Systems}
\label{sec:loadflow}

Consider the standard load-flow problem in a power network with $n$ constant-power buses and a single slack bus. It was recently shown in \cite{multiphaseArxiv} that this problem can be cast as the following fixed-point problem
\begin{equation} \label{eqn:pf_static}
\bv = h(\bv, \bs)
\end{equation}
where the vector $\bv \in \reals^{2n}$ collects the voltage phasors and the vector $\bs \in \reals^{2n}$ collects the active and reactive power injections across the buses. In particular, under certain conditions on the power injections $\bs$, the map $h(\cdot, \bs)$ is a contraction and self-map in the $\ell_\infty$ norm, on some (proper) subset $\cD$ of $\reals^{2n}$; see \cite{multiphaseArxiv} for details. 

Observe that problem \eqref{eqn:pf_static} is a static problem that presupposes a fixed power injection vector $\bs$. However, in modern power networks with high	 penetration of renewable energy sources and flexible loads, the power injections vary at a fast time scale. Therefore, consider a sequence $\{\bs^{(t)}\}_{t = 1}^\infty$, yielding the following time-varying problem
\begin{equation} \label{eqn:pf_dynamic}
\bv = h(\bv, \bs^{(t)}), \, t \in \mathbb{N}.
\end{equation}
Note that \eqref{eqn:pf_dynamic} fits the proposed framework, with $f^{(t)} (\cdot) := h(\cdot, \bs^{(t)})$, and the results of Theorem \ref{thm:result_fixedpoint} applies.

Consider now a multi-area problem, whereby the load-flow computation is distributed across $N$ physical areas of the power network. Due to space constraints, we present below the three-area case with a single connection point between them; see Fig.~\ref{fig:F_system} for an illustration.
Let $\bv_i$ and $\bs_i$ denote the vectors collecting the voltage phasors and power injections of area $i$, respectively. Suppose that area $1$ contains the slack bus, and let $\bv_{12} \in \reals^2$ be the element of $\bv_1$ representing the voltage phasor at the connection point between area $1$ and $2$, and $\bv_{23}$ be the element of $\bv_2$ representing the voltage phasor at the connection point between area $2$ and $3$. In this case, the problem \eqref{eqn:pf_dynamic} can be decomposed as 
\begin{subequations} \label{eqn:pf}
\begin{align}
\bv_1 &= h_1(\bv_1, \bs_1^{(t)}, g_2(\bv_2, \bv_{12})) \\
\bv_2 &= h_2(\bv_2, \bv_{12}, \bs_2^{(t)}, g_3(\bv_3, \bv_{23})) \\
\bv_3 &= h_2(\bv_3, \bv_{23}, \bs_3^{(t)}),
\end{align}
\end{subequations}
where $h_i$ represent the power flow mapping of area $i$, and $g_j(\bv_j, \bv_{ij})$ is the power injection at the connection point between area $i$ and $j$ computed from the knowledge of the voltages in area $j$ and the voltage at the connection point. In practice, the value of $g_j(\bv_j, \bv_{ij})$ can be measured in area $i$ similarly to the rest of power injections in area $i$; let $\bs_{ji}$ denote the measured value.
The value of $\bv_{ij}$ can be either communicated or measured with a phasor measurement unit. 
This leads to the following \emph{inexact} load-flow mappings
\begin{subequations} \label{eqn:pf_online}
\begin{align}
\tilde{f}_1^{(t)} (\bv_1) &:= h_1(\bv_1, \bs_1^{(t)},\bs_{21}^{(t)})  \\
\tilde{f}_2^{(t)} (\bv_1, \bv_2 ) &:= h_2(\bv_2, \bv_{12}, \bs_2^{(t)}, \bs_{32}^{(t)}) \\
\tilde{f}_3^{(t)} (\bv_2, \bv_3 ) &:= h_3(\bv_3, \bv_{23}, \bs_3^{(t)}),
\end{align}
\end{subequations}
where $\bs^{(t)}_{ji}$ is the measurement of $g_j(\bv_j^{(t, *)}, \bv_{ij}^{(t, *)})$,
and the result of Theorem \ref{thm:async_inf} applies. We call algorithm \eqref{eqn:fp_algo_dist_async} that is based on the mapping \eqref{eqn:pf_online} a \emph{feedback-based} load flow, as it relies on the measurement of $g_j(\bv_j^{(t, *)}, \bv_{ij}^{(t, *)})$ rather than on the explicit evaluation of $g_j(\bv_j^{(t)}, \bv_{ij}^{(t)})$ as prescribed by the exact mapping defined in \eqref{eqn:pf}.

\section{Illustrative Numerical Results}
\label{sec:results}

\subsection{Feedback-based gradient method}

Following the example of Section \ref{sec:feedback}, consider the time-varying problem 
\begin{align}
\min_{\{x_i \in [x_i^m, x_i^M]\}_{i = 1}^N} h^{(t)}(\bx) + \frac{\gamma}{2} (y^{(t)}(\bx) - r^{(t)})^2 + \frac{\eta}{2}\|\bx\|_2^2 \label{eq:example_problem}
\end{align}
where: $h^{(t)}(\bx) := \sum_{i = 1}^N h_i^{(t)}(x_i)$, $\{h_i^{(t)}\}$ are convex functions; $\gamma > 0$ is a given weight; $y^{(t)}(\bx) := \bc^\sfT \bx + w^{(t)}$ is a synthetic model for a measurable quantity $y^{(t)}$;  $r^{(t)}$ is a given reference for $y^{(t)}(\bx)$; $\eta > 0$ is a regularization parameter; and, $x_i^m$, $x_i^M$ are known parameters (with $x_i^m < x_i^M$). If the function $h^{(t)}(\bx)$ is strongly convex, then the parameter $\eta$ can be set to $0$. The problem~\eqref{eq:example_problem} seeks a trade off between tracking of a reference $r^{(t)}$ associated with $y^{(t)}(\bx)$ and costs associated with some control actions $\bx$.    

Similar to, e.g.,~\eqref{eq:online}, assuming that $h^{(t)}(\bx)$ is strongly convex and $\eta = 0$, an online gradient descent method involves a sequential execution of the following step:
\begin{align}
 \bx^{(t+1)} & = \proj_{\cX} \Big\{ \bx^{(t)} - \alpha \big(\nabla_\bx h^{(t)}(\bx^{(t)})  \nonumber \\
 & \hspace{2.2cm} + \gamma \bc(y^{(t)}(\bx^{(t)}) - r^{(t)} ) \big) \Big\}
\end{align}
where $\cX$ is the Cartesian product of the sets $\{[x_i^m, x_i^M]\}$. On the other hand, a feedback-based gradient descent method amounts to: 
\begin{align}
 \bx^{(t+1)} & = \proj_{\cX} \Big\{ \bx^{(t)} - \alpha \big(\nabla_\bx h^{(t)}(\bx^{(t)})  \nonumber \\
 & \hspace{2.2cm} + \gamma \bc(\hat{y}^{(t)} - r^{(t)} ) \big) \Big\} \label{eq:gradient_example_problem}
\end{align}
with $\hat{y}^{(t)} $ a measurement of $y^{(t)}(\bx^{(t)})$ at time $t$. Notice that the computation of~\eqref{eq:gradient_example_problem} can be decoupled across the entries of the vector $\bx$; in fact, upon receiving $\hat{y}^{(t)} - r^{(t)}$ from a central entity, each agent $i = 1,\ldots N$ can update $x_i$ as $x_i^{(t+1)} = \proj_{[x_i^m, x_i^M]} \{ x_i^{(t)} - \alpha (\nabla_{x_i} h_i^{(t)}(x_i^{(t)}) + \gamma c_i(\hat{y}^{(t)} - r^{(t)} ) ) \}$ (the communication network has, in this case, a star topology).

\begin{figure}[t!]
  \centering
  \includegraphics[width=1.05\columnwidth]{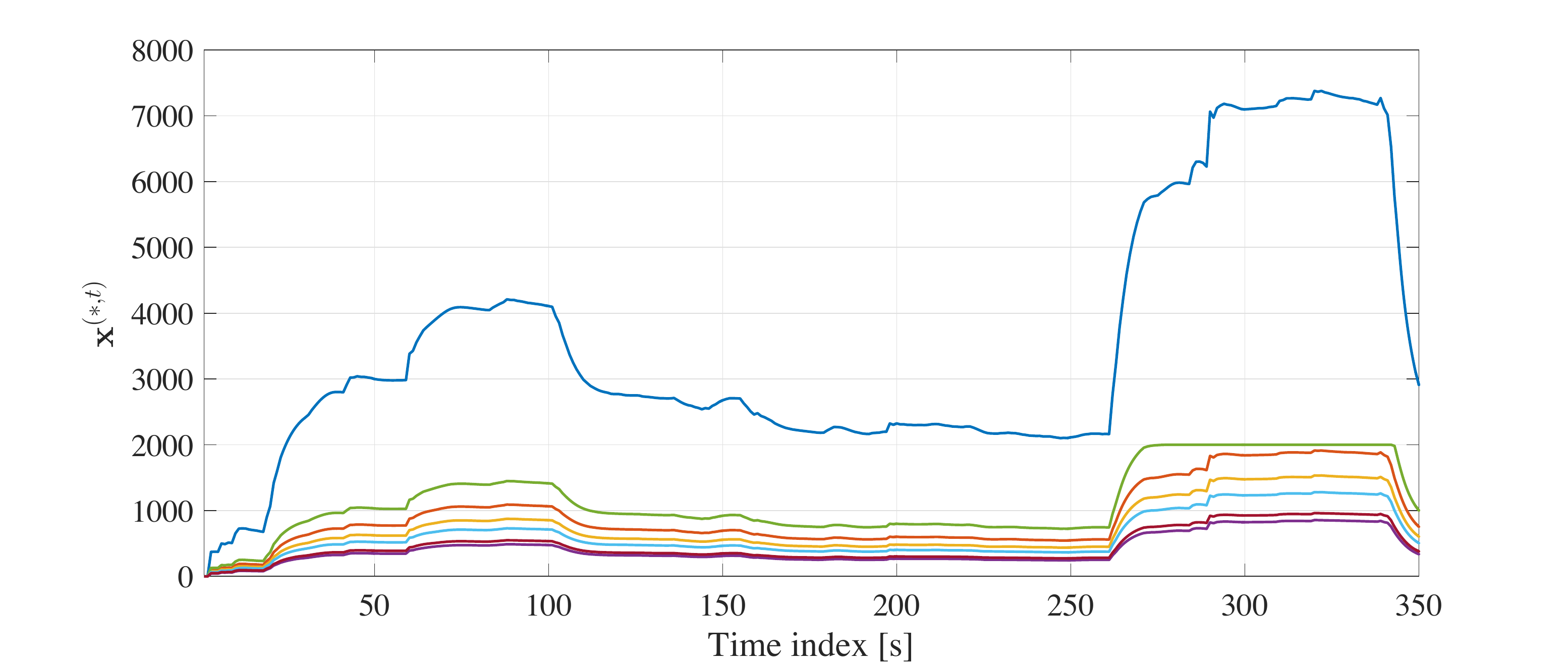}
\vspace{-.4cm}
\caption{Evolution of the optimal solution of problem~\eqref{eq:example_problem}.} \label{fig:F_optimal_solution}
\vspace{-.5cm}
\end{figure}
\begin{figure}[t!]
  \centering
  \includegraphics[width=1.0\columnwidth]{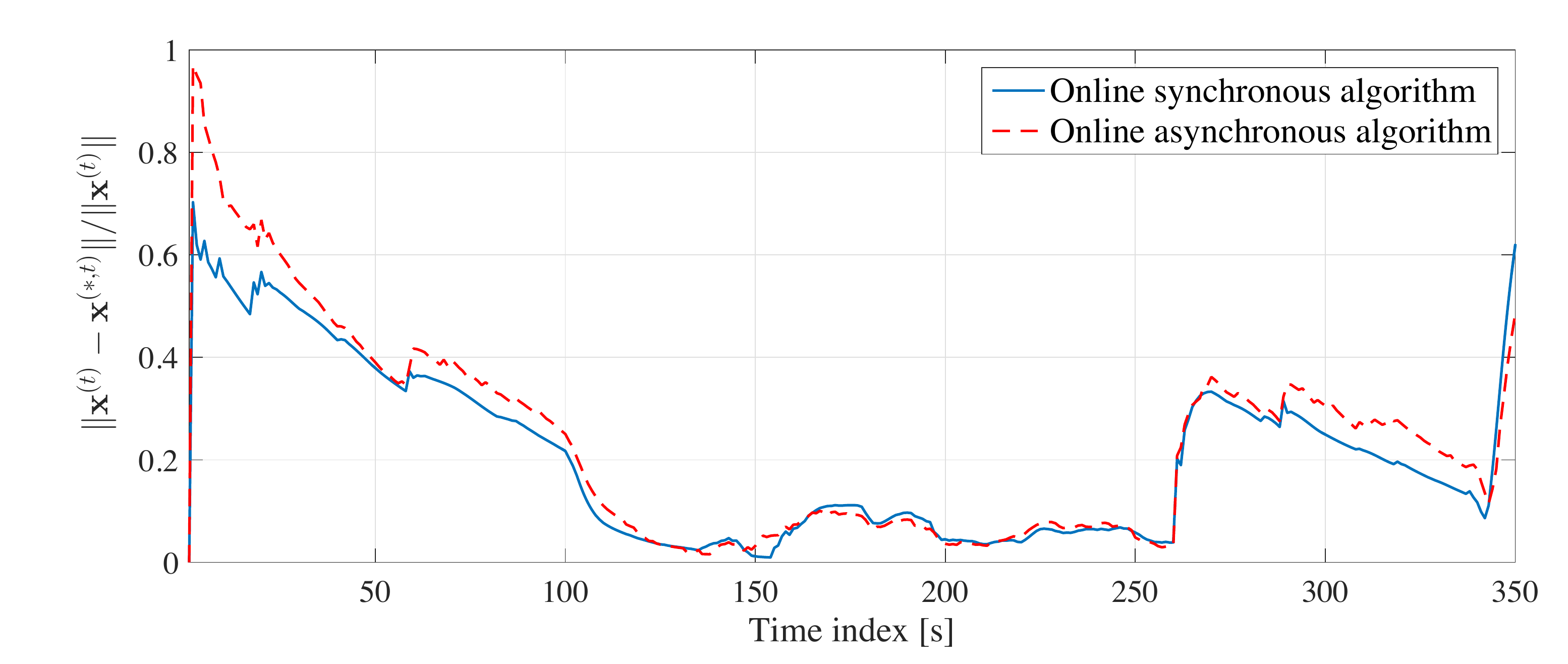}
\vspace{-.4cm}
\caption{Tracking error for the online projected gradient algorithm.} \label{fig:F_error_gradient_method}
\vspace{-.5cm}
\end{figure}

We apply~\eqref{eq:example_problem} and~\eqref{eq:gradient_example_problem} to the feeder in Fig.~\ref{fig:F_system}, where $\bx$ represents a vector of powers of seven controllable devices, and non-controllable powers vary at every second. A linear power flow model~\cite{multiphaseArxiv} is utilized to build the vector $\bc$, and the cost functions are set as $h_i^{(t)}(x_i) = (a_i/2) x_i^2$ with $a_i > 0$. 

Fig.~\ref{fig:F_optimal_solution} illustrates the evolution of the optimal vector $\bx^{(\ast,t)}$ when problem~\eqref{eq:example_problem} is solved to convergence at each time step. On the other hand, Fig.~\ref{fig:F_error_gradient_method} shows the tracking error $\|\bx^{(t)} - \bx^{(\ast, t)}\|_2 / \|\bx^{(\ast, t)}\|_2$ for: a synchronous version of the algorithm~\eqref{eq:gradient_example_problem}, with one step computed at every second; and, an asynchronous version where the packet containing $\hat{y}^{(t)} - r^{(t)}$ can be lost with probability $0.1$. The packet drop probability is assumed to be identical for each communication link. Although Fig.~\ref{fig:F_error_gradient_method} provides an illustrative snapshot, the tracking errors of the algorithms are bounded; the error increases when the optimal solution $\bx^{(\ast,t)}$ changes fast with a somewhat extreme scenario after $250$ seconds), but then settles to lower values as soon as the evolution of the optimal solution is smoother. The asynchronous algorithm exhibits a higher error, as expected from the analytical bound.

\subsection{Multi-Area Load Flow}

To test the online asynchronous load-flow method described in Section~\ref{sec:loadflow}, consider the IEEE 37-node test feeder illustrated in Fig.~\ref{fig:F_system}. The feeder is three-phase and unbalanced, and it has a nominal voltage of $4800$ V. For this test, it is partitioned into three areas. Algorithms that are based on~\eqref{eqn:pf_online} are tested next. 

Fig.~\ref{fig:F_system} illustrates the convergence of the algorithm for the static load flow (that is, where the power injections are fixed) for three cases: the synchronous algorithm; the feedback-based synchronous algorithm; and the feedback-based asynchronous algorithm, where the probability of a packet drop on the communication links between the areas is set to $0.5$. The plot confirms that the algorithm converges, with the feedback-based synchronous setting yielding the best convergence rate.     

Fig.~\ref{fig:F_tracking_fixedpoints} illustrates the tracking error when the vector of powers $\bs^{(t)}$ is time varying; in particular, the time interval is of $1$ second, and powers are varying at each time instant. The error $\|\bv^{(t)} - \bv^{(\ast, t)}\|_2 / \|\bv^{(\ast, t)}\|_2$, where $\bv^{(\ast, t)}$ is the exact fixed point and $\bv^{(t)}$ is the iterate produced by the algorithm is plotted for: the online synchronous algorithm; the online asynchronous algorithm, with a packet drop probability of $0.01$ on each communication link; and, the asynchronous scheme with packet drop probability of $0.1$. It can be seen that packet drops increase the tracking error, but the error remains bounded. Finally, Fig.~\ref{fig:F_voltages_tracking} illustrates the evolution of the true and estimated voltages at two representative connection points of the network (magnified for improve readability). It can be seen that, in spite of packet drops, the voltage error is negligible. 

\begin{figure}[t!]
  \centering
  \includegraphics[width=1.0\columnwidth]{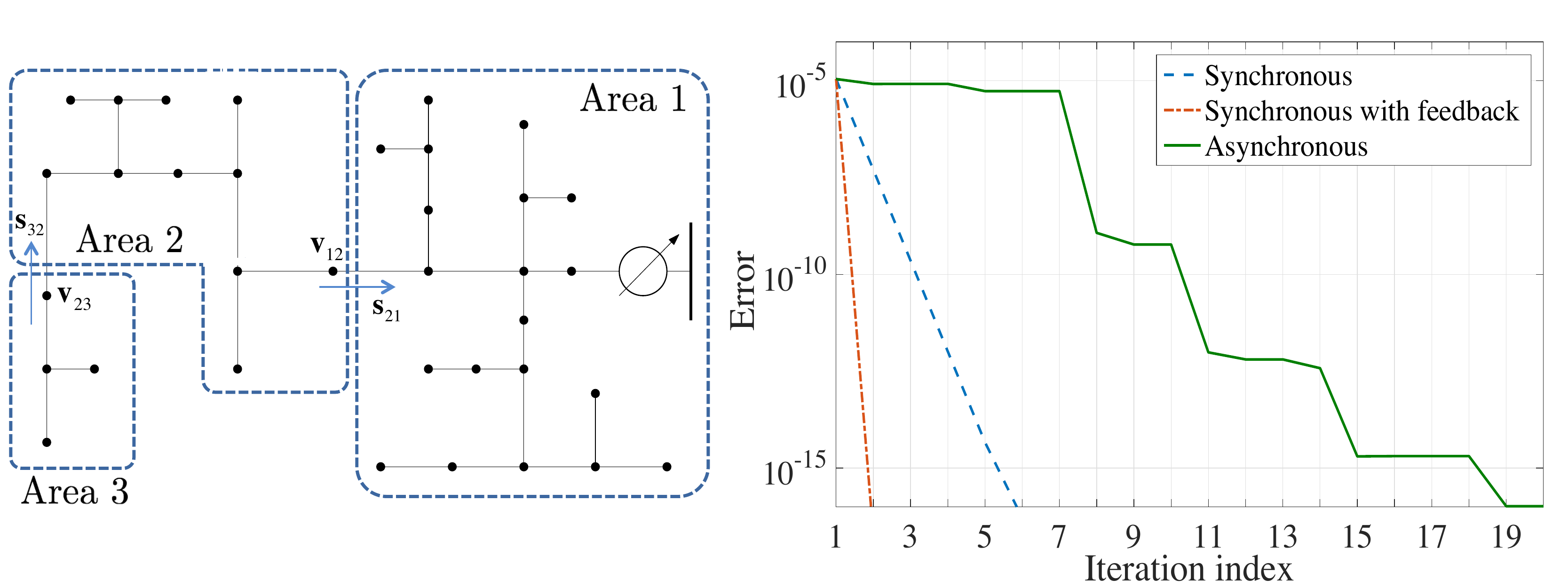}
\vspace{-.4cm}
\caption{Test system (left); convergence of the synchronous and asynchronous algorithms for the load flow in the time invariant case (right).} \label{fig:F_system}
\vspace{-.5cm}
\end{figure}
\begin{figure}[t!]
  \centering
  \includegraphics[width=1.0\columnwidth]{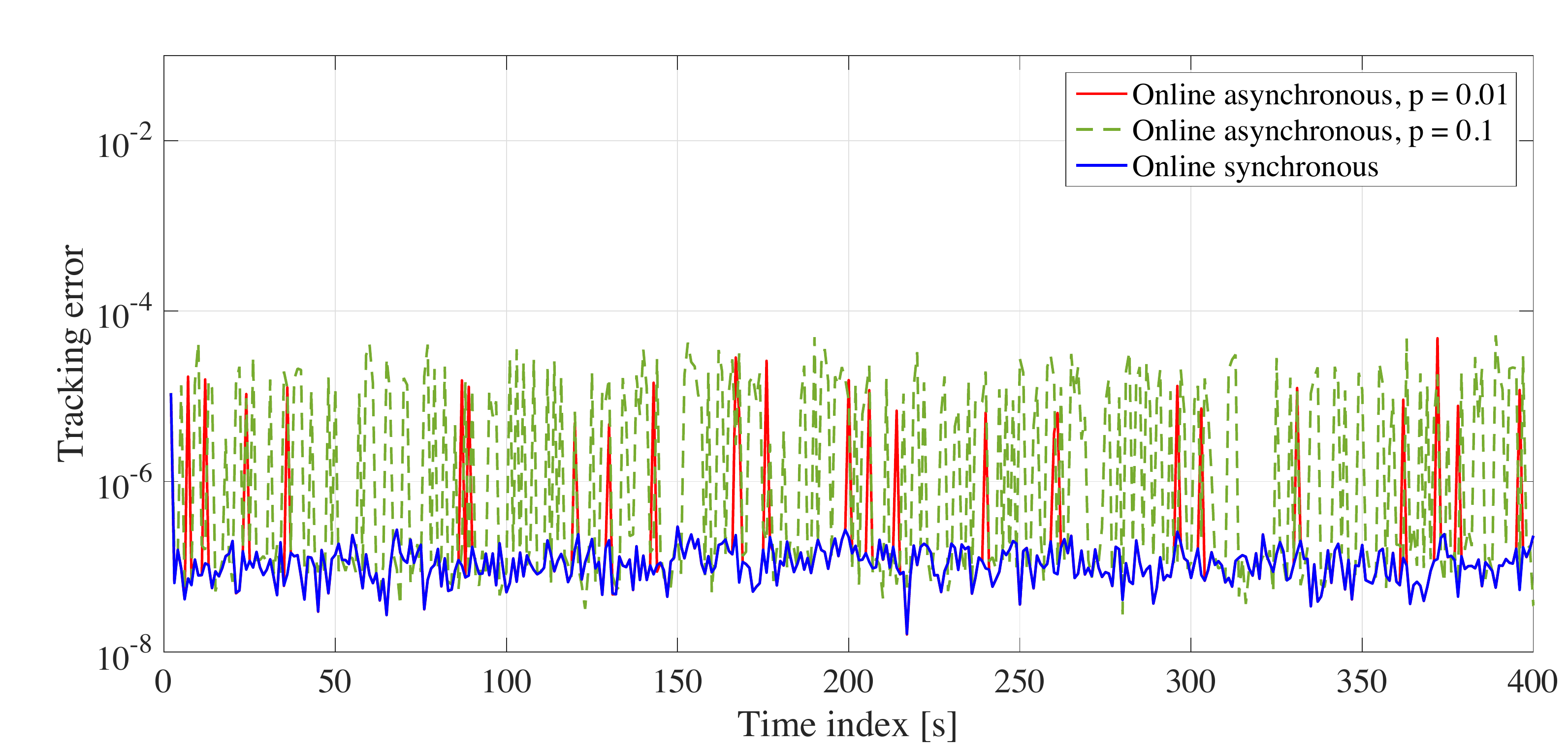}
\vspace{-.4cm}
\caption{Tracking error $\|\bv^{(t)} - \bv^{(\ast, t)}\|_2 / \|\bv^{(\ast, t)}\|_2$.} \label{fig:F_tracking_fixedpoints}
\end{figure}
\begin{figure}[t!]
  \centering
  \hspace{-.3cm} \includegraphics[width=1.04\columnwidth]{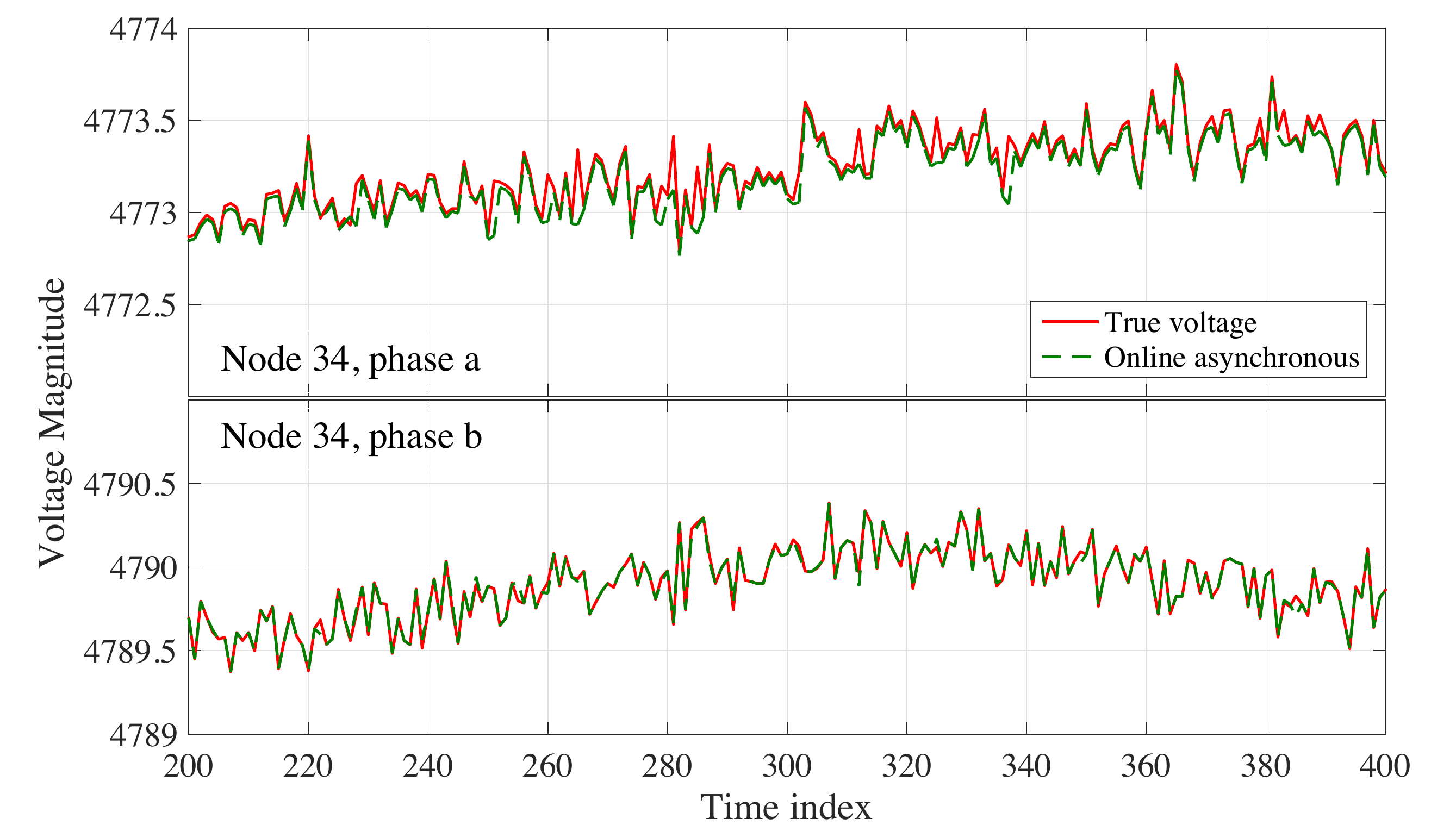}
\vspace{-.4cm}
\caption{Evolution of the voltages for two connection points. The probability of a packet loss is $0.1$ per communication link. } \label{fig:F_voltages_tracking}
\vspace{-.5cm}
\end{figure}

\section{Conclusion}
\label{sec:conc}
This paper developed an algorithmic framework for tracking fixed points of time-varying contraction mappings. Analytical results for the tracking error were established for the cases where only an imperfect information of the map is available; and the algorithm is implemented in a distributed fashion, with communication delays and packet drops leading to asynchronous algorithmic updates. 
Application to several classes of problems, including gradient-based methods for time-varying convex programs and multi-area load-flow, was demonstrated, and illustrative numerical results were provided. Future research directions include the extension of the results to a larger class of mappings, and to a more general asynchronous setting with non-homogeneous update rates.

\appendix

\begin{lemma} \label{lem:geom}
Let $\{a^{(t)}\}$ be a given positive sequence. Suppose that there exist $T < \infty$, $b < \infty$,  and $0 < \Gamma < 1$ such that, for  all $t > T$:
\[
a^{(t)} \leq b + \Gamma a^{(t - \delta^{(t)})},
\]
for some $ \delta^{(t)} \in \{1, \ldots, T\}$. Then,
\[
\limsup_{t \rightarrow \infty} a^{(t)} \leq b (1 - \Gamma)^{-1}.
\]
\end{lemma}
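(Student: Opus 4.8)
The plan is to prove Lemma~\ref{lem:geom} in two stages: first show that the sequence $\{a^{(t)}\}$ is bounded, and then pin down its $\limsup$ by a standard $\epsilon$-argument. The structural fact that drives everything is that the ``lookback'' index lies in a window of fixed width: $t-\delta^{(t)}\in\{t-T,\dots,t-1\}$ for every $t>T$. This lets one control the whole sequence by partitioning the time axis into blocks of length $T$.

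For the boundedness step, define $P_n:=\max\{a^{(t)}:1\le t\le nT\}$, which is nondecreasing in $n$. For any $t$ with $nT<t\le(n+1)T$ (here $n\ge1$, so $t>T$ and the recursion applies), the lookback index satisfies $t-\delta^{(t)}\ge t-T>(n-1)T\ge0$ and $t-\delta^{(t)}\le t-1<(n+1)T$, hence $a^{(t-\delta^{(t)})}\le P_{n+1}$ and therefore $a^{(t)}\le b+\Gamma P_{n+1}$. Taking the maximum over such $t$ gives $P_{n+1}\le\max\{P_n,\,b+\Gamma P_{n+1}\}$. If the maximum is attained by the second term, then $P_{n+1}\le b+\Gamma P_{n+1}$, i.e. $P_{n+1}\le b/(1-\Gamma)$; if it is attained by $P_n$, then $P_{n+1}\le P_n$, which together with monotonicity forces $P_{n+1}=P_n$. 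In either case $P_{n+1}\le\max\{P_n,\,b/(1-\Gamma)\}$, so by induction $P_n\le\max\{P_1,\,b/(1-\Gamma)\}=:Q<\infty$ for all $n$, and hence $\sup_t a^{(t)}\le Q$.

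For the second stage, set $a^\ast:=\limsup_{t\to\infty}a^{(t)}$, which is now known to be finite. Fix $\epsilon>0$ and pick $T_\epsilon>T$ with $a^{(t)}\le a^\ast+\epsilon$ for all $t\ge T_\epsilon$. Then for every $t>T_\epsilon+T$ one has $t-\delta^{(t)}\ge t-T>T_\epsilon$, so $a^{(t-\delta^{(t)})}\le a^\ast+\epsilon$ and the recursion yields $a^{(t)}\le b+\Gamma(a^\ast+\epsilon)$. Taking $\limsup$ over $t$ gives $a^\ast\le b+\Gamma(a^\ast+\epsilon)$, and letting $\epsilon\downarrow0$ gives $a^\ast\le b+\Gamma a^\ast$, i.e. $a^\ast\le b/(1-\Gamma)$, as claimed.

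The only real subtlety — and the step I would be most careful about — is the boundedness claim: a priori nothing rules out $a^{(t)}\to\infty$, in which case the $\limsup$ manipulation in the second stage would be vacuous. The block argument above resolves this with no extra hypotheses, the key point being that a width-$T$ block is ``self-contained'': a term in a given block only looks back within that block and the one preceding it. Everything else is routine arithmetic that uses only $0<\Gamma<1$.
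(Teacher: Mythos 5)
Your proof is correct, but it takes a different route from the paper's. The paper's proof unrolls the recursion all the way down: starting from $a^{(t)}$ and repeatedly applying the hypothesis, each application drops the index by at least $1$ and at most $T$, so after some number $N(t)\ge \lfloor (t-1)/T\rfloor$ of steps one lands at an index $t_0(t)\le T$, yielding the explicit finite-$t$ bound $a^{(t)} \le b\sum_{i=0}^{N(t)-1}\Gamma^i + \Gamma^{N(t)}a^{(t_0(t))}$; the conclusion then follows because $N(t)\to\infty$ and $a^{(t_0(t))}$ ranges over the finitely many values $a^{(1)},\dots,a^{(T)}$. Note that this unrolling simultaneously establishes boundedness and the asymptotic bound, and it gives a quantitative, non-asymptotic estimate (a geometric decay of the initial condition at rate $\Gamma^{1/T}$ per unit time), which is the kind of statement one would want if the theorems were to be stated with explicit transients. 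Your two-stage argument trades that quantitative information for a cleaner logical structure: the block-maximum induction ($P_{n+1}\le\max\{P_n,\,b/(1-\Gamma)\}$) isolates exactly the point you correctly identify as the only real subtlety --- ruling out $a^{(t)}\to\infty$ so that $a^\ast$ is finite --- and the subsequent $\epsilon$-argument $a^\ast\le b+\Gamma(a^\ast+\epsilon)$ is routine. Both proofs are valid; yours is arguably more careful about the finiteness of $\limsup$ (which the paper's one-line justification glosses over), while the paper's yields a rate.
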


\vspace{.2cm}

\begin{proof}
By the hypothesis of the lemma, for each $t$, there exist $N(t) \in \{ \lfloor \frac{t-1}{T} \rfloor, \ldots, t-1 \}$ and $t_0(t) \leq T$, such that

\[
a^{(t)} \leq b \left (\sum_{i=0}^{N(t)-1} \Gamma^i \right) + \Gamma^{N(t)} a^{\left(t_0(t) \right)}.
\]
The result follows by noticing that $N(t) \rightarrow \infty$ as $t \rightarrow \infty$ and $t_0(t)$ is bounded for all $t$.

\end{proof}

\bibliographystyle{IEEEtran}
\bibliography{biblio.bib}

\end{document}